\newtheorem{theorem}{\bf Theorem}[section]
\newtheorem{lemma}[theorem]{\bf Lemma}
\newtheorem{definition}[theorem]{\bf Definition}
\newtheorem{corollary}[theorem]{\bf Corollary}
\newtheorem{proposition}[theorem]{\bf Proposition}
\newcommand{\rme}{\mathrm{e}}
\newcommand{\rmi}{\mathrm{i}}
\begin{document}

\title{Quasipositive links and electromagnetism}

\author{
Benjamin Bode}
\date{}

\address{Department of Mathematics, Graduate School of Science, Osaka University, Toyonaka, Osaka 560-0043, Japan}
\email{ben.bode.2013@my.bristol.ac.uk}




\maketitle
\begin{abstract}
For every link $L$ we construct a complex algebraic plane curve that intersects $S^3$ transversally in a link $\tilde{L}$ that contains $L$ as a sublink. This construction proves that every link $L$ is the sublink of a quasipositive link that is a satellite of the Hopf link. The explicit construction of the complex plane curve can be used to give upper bounds on its degree and to create arbitrarily knotted null lines in electromagnetic fields, sometimes referred to as vortex knots. Furthermore, these null lines are topologically stable for all time. We also show that the time evolution of electromagnetic fields as given by Bateman's construction and a choice of time-dependent stereographic projection can be understood as a continuous family of contactomorphisms with knotted field lines of the electric and magnetic fields corresponding to Legendrian knots.
\end{abstract}
%

\section{Introduction}\label{sec:intro}
This article studies links that are transverse intersections $f^{-1}(0)\cap S^3$ of the vanishing set of a complex polynomial $f:\mathbb{C}^2\to\mathbb{C}$ and the unit 3-sphere $S^3=\{(u,v)\in\mathbb{C}^2:|u|^2+|v|^2=1\}$.
It was shown by Rudolph \cite{rudolph83, rudolph84} and Boileau and Orevkov \cite{boileau} that the set of links that can arise in this way, the \textit{transverse} $\mathbb{C}$\textit{-links}, is identical to the set of quaispositive links.

\begin{definition}A link is called quasipositive if it is the closure of a braid $B$ of the form $B=\prod_{j=1}^\ell w_j \sigma_{i_j}w_j^{-1}$, where $w_j$ is any braid word and $\sigma_{i_j}$ is a positive standard generator of the braid group.
\end{definition}

An excellent overview of the topic can be found in \cite{rudolph}.

In \cite{bode:2016polynomial} the author and Dennis showed that every link can arise as a transverse intersection $f^{-1}(0)\cap S^3$, where $f:\mathbb{C}^2\to\mathbb{C}$ is a polynomial in complex variables $u$, $v$ and the complex conjugate $\overline{v}$. Since $f$ is holomorphic in $u$, but not in the second variable $v$, we call such polynomials \textit{semiholomorphic}. The proof in \cite{bode:2016polynomial} is constructive. It describes an algorithm that finds a corresponding semiholomorphic polynomial for any link $L$. Furthermore, the degree of the constructed polynomial can be bounded in terms of the number of strands and number of crossings of a braid that closes to $L$ \cite{bode:2016polynomial}.

Rudolph showed that every link is the sublink of a quasipositive link \cite{rudolph}. However, his proof does not allow us to find the corresponding polynomials. In Section \ref{sec:constr} we present a modification to the algorithm from \cite{bode:2016polynomial} that produces holomorphic polynomials $f$ with $L\subset f^{-1}(0)\cap S^3$ for any link $L$. This shows that every link arises as a subset of a quasipositive link (i.e., transverse $\mathbb{C}$-link) in a constructive way, which allows us to show that $f^{-1}(0)\cap S^3$ is a satellite of the Hopf link. Since the differences to the algorithm in \cite{bode:2016polynomial} are very small, the bounds on the degree of the polynomial remain almost unchanged and we can give a quite explicit description of $f^{-1}(0)\cap S^3$. We obtain the following result.
\begin{theorem}
\label{thm:main1}
Let $B$ be a braid on $s$ strands that closes to the link $L$. Then $L$ is the sublink of a quasipositive link $\tilde{L}=f^{-1}(0)\cap S^3$ for some complex polynomial $f:\mathbb{C}^2\to\mathbb{C}$. Furthermore, $\tilde{L}$ is a satellite of the Hopf link and $\deg_u f=s$.
\end{theorem}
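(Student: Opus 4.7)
The plan is to modify the algorithm of \cite{bode:2016polynomial} so that its output is holomorphic rather than only semiholomorphic, paying the price of adding extra components that will turn $L$ into a proper sublink of the zero set. First, I would recall the construction of \cite{bode:2016polynomial}: given the braid $B$ on $s$ strands, one produces a smooth family of $s$ points $z_1(\theta),\ldots,z_s(\theta)\in\mathbb{C}$, $\theta\in[0,2\pi]$, whose trace is geometrically $B$. Fourier-truncating and radially scaling yields trigonometric polynomials $\tilde z_k$ that become polynomials in $v$ and $\bar v$, and assembling the symmetric functions gives a semiholomorphic polynomial $g(u,v,\bar v)=\prod_{k=1}^{s}(u-\tilde z_k(v,\bar v))$ of $u$-degree $s$ whose zero set on a suitable torus $\{|v|=\rho\}$ is the braid $B$, hence whose zero set on $S^3$ is $L$.

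The key new step is to exploit the identity $\bar v=\rho^2/v$ holding on $\{|v|=\rho\}$. Substituting this into the coefficients of $g$ converts it to a rational function in $u$ and $v$; multiplying by the smallest power $v^N$ that clears denominators produces a genuine holomorphic polynomial $f(u,v)\in\mathbb{C}[u,v]$. Since only powers of $v$ are introduced, $\deg_u f=\deg_u g=s$, and on $\{|v|=\rho\}$ one has $f=v^N g$ with $v\neq 0$, so $f$ and $g$ have the same zeros there. Intersecting with $S^3$ therefore still returns $L$ as part of the vanishing locus.

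Next I would identify the remaining components of $\tilde L\defeq f^{-1}(0)\cap S^3$ and match them with a satellite-of-Hopf-link structure. The factor $v^N$ adds zeros along the circle $\{v=0\}\cap S^3$, which is one component of the standard Hopf link in $S^3$; further new zeros, forced by the substitution $\bar v\mapsto\rho^2/v$ at points with $|v|\neq\rho$, will (after choosing $\rho$ close enough to $1$) be confined to a small tubular neighborhood of this component. Meanwhile the original $L$ sits in the torus $\{|v|=\rho\}$, i.e.\ in a tubular neighborhood of the complementary Hopf component $\{u=0\}\cap S^3$. Hence $\tilde L$ lies in a regular neighborhood of the Hopf link, which is the satellite claim. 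Transversality of $f^{-1}(0)$ with $S^3$, combined with the Rudolph--Boileau--Orevkov theorem recalled in the introduction, then gives quasipositivity of $\tilde L$ and completes the proof.

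The main obstacle I anticipate is step three, namely controlling the new zeros of $f$ off the torus $\{|v|=\rho\}$ and ensuring transversality with $S^3$. The substitution $\bar v\mapsto \rho^2/v$ alters $g$ globally, and one must prove that the extra zeros it creates (a) lie in the prescribed neighborhood of the Hopf component $\{v=0\}\cap S^3$ rather than interfering with $L$, and (b) meet $S^3$ transversely. I would expect to handle this by taking $\rho$ close to $1$, so that $L$ lies deep inside the solid torus around $\{u=0\}\cap S^3$ and the new zeros are pushed toward the complementary solid torus, together with an explicit estimate on the modulus of the new roots of $f(u,\cdot)$ of the same flavour as the transversality estimates in \cite{bode:2016polynomial}. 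Once these two points are secured, quasipositivity is automatic and $\deg_u f=s$ is manifest from the construction.
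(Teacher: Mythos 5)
Your overall strategy is the same as the paper's: substitute $\overline{v}\mapsto(\mathrm{const})/v$ into the semiholomorphic polynomial of \cite{bode:2016polynomial}, clear denominators to obtain a holomorphic $f$ with $\deg_u f=s$, show that $L$ survives near one component of the Hopf link while the extra zeros cluster near the other, and invoke Boileau--Orevkov for quasipositivity. However, one step as written is false, and it obscures the actual source of the extra components. The numerator $f$ does \emph{not} vanish on $\{v=0\}$: the monomial of the semiholomorphic polynomial carrying the highest power of $\overline{v}$ has $u$-degree $0$ and a nonzero constant coefficient, so after clearing denominators the term of $f$ of lowest $v$-degree is a nonzero constant; hence $f(u,0)\neq 0$ for all $u$, and the circle $\{v=0\}\cap S^3$ is not a component of $\tilde{L}$. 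So ``the factor $v^N$ adds zeros along $\{v=0\}$'' is not where the new components come from, and if you tried to build the satellite structure on that claim the argument would collapse.

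The correct mechanism --- and the genuinely new content of the proof relative to \cite{bode:2016polynomial} --- is that precisely because $f(u,0)$ is a nonzero constant independent of $u$, the roots $u_j(r,t)$ of $f(\cdot,r\rme^{\rmi t})$ tend to infinity as $r\to 0$. Consequently each root curve $(u_j(r,t),r\rme^{\rmi t})$ must cross $S^3$ at least twice: once near $r=1$ (these crossings reproduce $L$) and at least once more near $r=0$. In the semiholomorphic setting, ``a root curve meets $S^3$ more than once'' is one of the three failure modes, all of which are eliminated by shrinking the scaling parameter; here it is unavoidable, and what must be proved instead is that for a small enough scaling each curve meets $S^3$ \emph{exactly} twice --- once in $(1-\delta,1]$ and once in $(0,\epsilon]$ --- which follows by bounding the unscaled roots on the compact annulus $r\in[\epsilon,1-\delta]$ and reusing the uniqueness arguments of \cite{bode:2016polynomial} on each of the two end intervals. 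The second family of crossings occurs near $|u|=1$, i.e.\ in a tubular neighbourhood of $\{v=0\}\cap S^3$, and constitutes the extra link $L_0$; together with $L$ sitting near $\{u=0\}\cap S^3$ this yields the satellite-of-the-Hopf-link structure. Your anticipated ``explicit estimate on the modulus of the new roots'' needs to be exactly this two-intersection count; without it you have not ruled out additional crossings that would spoil the identification of the component near $r=1$ with $L$.
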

An upper bound for the total degree of $f$ in terms of the number of strands and the length of $B$ is given in Section \ref{sec:constr} in Proposition \ref{prop:bound}. 

The constructive nature of the proof in Section \ref{sec:constr} has an application in the context of knotted fields in physics. We show in Section \ref{sec:EM} how the polynomials can be used to construct configurations of electromagnetic fields $\mathbf{F}_t=\mathbf{B}_t+\rmi\mathbf{E}_t:\mathbb{R}^3\to\mathbb{C}^3$, whose set of null lines $\mathbf{F}_t^{-1}(0,0,0)$ contains $L$ for all time $t\in\mathbb{R}$. The link type is thus a stable topological feature of the electromagnetic field. 
\begin{theorem}
\label{thm:main2}
For every link $L$ we can construct an electromagnetic field $\mathbf{F}_t$ that satisfies Maxwell's equations and such that the set of null lines $\mathbf{F}_t^{-1}(0,0,0)$ contains $L$ for all time $t$.
\end{theorem}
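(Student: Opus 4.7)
The plan is to feed the polynomial produced by Theorem~\ref{thm:main1} into Bateman's construction of null electromagnetic fields. First, apply Theorem~\ref{thm:main1} to obtain a complex polynomial $f:\mathbb{C}^2\to\mathbb{C}$ whose zero set on $S^3$ contains the given link $L$. Second, recall Bateman's observation: if $\alpha,\beta:\mathbb{R}^{3,1}\to\mathbb{C}$ are complex scalar fields satisfying $\nabla\alpha\times\nabla\beta=\rmi(\partial_t\alpha\,\nabla\beta-\partial_t\beta\,\nabla\alpha)$, then $\mathbf{F}=\nabla\alpha\times\nabla\beta$ is a null solution of the source-free Maxwell equations, and the class of such \emph{Bateman pairs} is closed under post-composition with any holomorphic map $\mathbb{C}^2\to\mathbb{C}^2$. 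I would take for $(\alpha_t,\beta_t)$ the standard Hopfion pair of Ra\~nada, Irvine--Bouwmeester and Kedia et al., whose restriction $\Phi_t\defeq(\alpha_t,\beta_t):\mathbb{R}^3\to S^3\subset\mathbb{C}^2$ to each time slice is a time-dependent version of inverse stereographic projection, and in particular a diffeomorphism onto $S^3$ minus a point.

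Next, I would define a new Bateman pair by post-composition with the holomorphic map $(u,v)\mapsto(f(u,v),\,f(u,v)g(u,v))$, where $g$ is a generic auxiliary polynomial, and set $\mathbf{F}_t\defeq\nabla(f\circ\Phi_t)\times\nabla((fg)\circ\Phi_t)$. A short chain-rule computation gives
\[
\mathbf{F}_t=f(\Phi_t)\,\bigl(f_ug_v-f_vg_u\bigr)(\Phi_t)\,\bigl(\nabla\alpha_t\times\nabla\beta_t\bigr),
\]
so the null set of $\mathbf{F}_t$ always contains the preimage $\Phi_t^{-1}\bigl(f^{-1}(0)\bigr)$. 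Because $\Phi_t$ is a diffeomorphism at each fixed time, this preimage is, for every $t$, a link in $\mathbb{R}^3$ ambient isotopic to $f^{-1}(0)\cap S^3$, which by Theorem~\ref{thm:main1} contains $L$ as a sublink. Thus $L\subset\mathbf{F}_t^{-1}(0,0,0)$ for all $t$, as required.

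The main obstacle I expect is handling the time-dependent map $\Phi_t$ carefully: one must verify that it remains a global diffeomorphism from $\mathbb{R}^3$ (minus one exceptional point) onto $S^3$ for all $t$, so that the preimage of $f^{-1}(0)\cap S^3$ is genuinely a link of the correct topological type at every instant; this is presumably where the explicit form of Bateman's Hopfion is used in an essential way. A secondary subtlety is that the null set of $\mathbf{F}_t$ also picks up extra components from the zeros of the Jacobian factor $(f_ug_v-f_vg_u)(\Phi_t)$ and from the residual zeros of the background Hopfion field $\nabla\alpha_t\times\nabla\beta_t$; choosing $g$ generically so that $\{f=0\}$ and $\{f_ug_v-f_vg_u=0\}$ meet only transversally in $S^3$ ensures these extra strands do not merge with or reconnect the sublink isotopic to $L$. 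Topological stability of $L$ for all time then follows automatically from the continuity of $t\mapsto\Phi_t$ and its time-wise diffeomorphism property.
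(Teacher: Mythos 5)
Your proposal is correct and follows essentially the same route as the paper: feed the polynomial from Section \ref{sec:constr} into Bateman's construction $\mathbf{F}_t=h(u,v)\,\nabla u\times\nabla v$ with the time-dependent stereographic projection of Eq.~(\ref{eq:stereo3}), so that the null set is the pullback of $h^{-1}(0)\cap S^3\supset L$ for every $t$. The only cosmetic difference is that the paper sets $h=\tilde{f}$ directly (realisable, e.g., by the Bateman pair $(u,\int\tilde{f}\,\rmd v)$), whereas your pair $(f,fg)$ yields $h=f\cdot(f_ug_v-f_vg_u)$ and hence additional null lines from the Jacobian factor --- harmless here, since the theorem only asserts that the null set \emph{contains} $L$.
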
 
The existence of such fields again follows from \cite{rudolph} (although it is not a result that appears in the literature), but the algorithm allows us to write down these fields explicitly.

We conclude with some remarks on the construction of electromagnetic fields with knotted field lines that remain knotted for all time. It turns out that in electromagnetic fields that are constructed using a method due to Bateman \cite{bateman} as in \cite{kedia1} knotted flow lines are Legendrian knots and time evolution is given by a continuous family of contactomorphism. This means that the problem of constructing electromagnetic fields with flow lines of a prescribed knot type can be reformulated in terms of holomorphic continuations of certain complex-valued functions.

\textbf{Acknowledgements:} The author is grateful to Mark Dennis, Sachiko Hamano, Daniel Peralta-Salas and Jonathan Robbins for valuable discussions and encouragements. The author would also like to express his gratitude towards John Keating for asking an excellent question that led to a lot of this work and towards Peter Feller for pointing out some of Rudolph's results. The author was supported by the Leverhulme Trust Research Programme Grant RP2013-K-009, SPOCK: Scientific Properties Of Complex Knots and by JSPS KAKENHI Grant Number JP18F18751 and a JSPS Postdoctoral Fellowship as JSPS International Research Fellow.

\section{Constructing transverse $\mathbb{C}$-links}
\label{sec:constr}

Let $L$ be a link in $S^3$ and $B$ be a braid on $s$ strands that closes to $L$. In this section we describe how to construct a complex algebraic plane curve $f:\mathbb{C}^2\to\mathbb{C}$ such that $L$ is ambient isotopic to a subset of $f^{-1}(0)\cap S^3$. The first steps in this construction are completely identical to the ones in \cite{bode:2016polynomial}. Let $\mathfrak{C}$ denote the set of components of $L$ or equivalently the set of cycles of the permutation of $s$ points associated to $B$. Let $s_C$ denote the number of strands that belong to the component $C\in\mathfrak{C}$. 

The first thing we need to do in order to construct $f$ is to find a very particular parametrisation of $B$, namely a parametrisation of the form
\begin{equation}
\label{eq:trigpara}
\bigcup_{C\in\mathfrak{C}}\bigcup_{j=1}^{s_C}\left(X_{C,j}(t),Y_{C,j}(t),t\right):=\bigcup_{C\in\mathfrak{C}}\bigcup_{j=1}^{s_C}\left(F_C\left(\frac{t+2\pi j}{s_C}\right),G_C\left(\frac{t+2\pi j}{s_C}\right),t\right),\qquad t\in[0,2\pi],
\end{equation}
where $F_C$, $G_C:\mathbb{R}\to\mathbb{R}$ are trigonometric polynomials, i.e., 
\begin{equation}F_{C}(t)=\sum_{k=-N_{C}}^{N_{C}}a_{C,k}\rme^{\rmi kt}\text{  and  }G_{C}(t)=\sum_{k=-M_{C}}^{M_{C}}b_{C,k}\rme^{\rmi kt}
\label{eq:deftrig}
\end{equation}
with $a_{C,-k}=\overline{a}_{C,k}$ and $b_{C,-k}=\overline{b}_{C,k}$ for all $C\in\mathfrak{C}$ and all $k$. The natural numbers $N_C$ and $M_C$ are the degrees of $F_C$ and $G_C$, respectively.

We showed in \cite{bode:2016polynomial} how such a parametrisation can be found for any braid using trigonometric interpolation.

Once we have a parametrisation as in Equation (\ref{eq:trigpara}) we can define a family of polynomials $g_{\lambda}:\mathbb{C}\times[0,2\pi]\to\mathbb{C}$
\begin{equation}
g_{\lambda}(u,t)=\prod_{C\in\mathfrak{C}}\prod_{j=1}^{s_C}\left(u-\lambda(X_{C,j}(t)+Y_{C,j}(t))\right). 
\end{equation}
Then for all $\lambda>0$ the vanishing set $g_{\lambda}^{-1}(0)$ is the braid $B$ in the parametrisation in Equation (\ref{eq:trigpara}) scaled by $\lambda$. Furthermore, when we expand the product, $g_{\lambda}$ turns out to be a polynomial not only in the complex variable $u$, but also in $\rme^{\rmi t}$ and $\rme^{-\rmi t}$ \cite{bode:2016polynomial}.

If we substitute $v$ for $\rme^{\rmi t}$ and the complex conjugate $\overline{v}$ for $\rme^{-\rmi t}$ in the polynomial expression of $g_{\lambda}$, we obtain a family of polynomials $f_{\lambda}:\mathbb{C}^2\to\mathbb{C}$ in complex variables $u$, $v$ and $\overline{v}$. Note that $f_{\lambda}(u,\rme^{\rmi t})=g_{\lambda}(u,t)$. The proof of the following result can be found in \cite{bode:2016polynomial}.
\begin{theorem}[\cite{bode:2016polynomial}]
For small enough values of $\lambda>0$ the intersection $f_{\lambda}^{-1}\cap S^3$ is the closure of $B$ and hence $L$.
\end{theorem}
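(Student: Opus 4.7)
The plan is to show that for small $\lambda>0$, the surface $\Sigma_\lambda := f_\lambda^{-1}(0)$ meets $S^3$ transversely in a 1-manifold that is isotopic to the closure of $B$. The argument proceeds by viewing $\Sigma_\lambda$ as a perturbation of $\Sigma_0$ and then connecting the intersection with the parametric braid already visible on the unit torus $T=\{|v|=1\}$.

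First, consider the limit $\lambda=0$. Every factor in $g_\lambda$ becomes $u$, so $f_0(u,v,\bar v)=u^s$ and $\Sigma_0=\{u=0\}$. This intersects $S^3$ transversely in the unknot $U_0:=\{(0,v):|v|=1\}$, which is the core of one solid torus in the Heegaard splitting of $S^3$. Since $f_\lambda$ depends polynomially (hence continuously) on $\lambda$ with $f_\lambda\to f_0$ in $C^k$ on compacta, transversality $\Sigma_\lambda \pitchfork S^3$ persists for all sufficiently small $\lambda$, and $\Sigma_\lambda\cap S^3$ lies in an arbitrarily small tubular neighborhood $N$ of $U_0$: outside any fixed neighborhood of $\{u=0\}$ one has $|f_\lambda|\geq \tfrac{1}{2}|u|^s>0$ uniformly for $\lambda$ small.

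Next, I would identify $\Sigma_\lambda$ locally near $T$. On $T$, the substitution $v=\mathrm e^{\mathrm i t}$ turns $f_\lambda$ into $g_\lambda$, and by construction $g_\lambda^{-1}(0)\cap T$ is precisely the parametric braid with strands at $u_{C,j}(t)=\lambda(X_{C,j}(t)+Y_{C,j}(t))$. Because the strands are pairwise distinct these are simple roots of $g_\lambda(\cdot,t)$, so $\partial_u f_\lambda \neq 0$ along them. The implicit function theorem then extends each strand to a smooth local complex graph $u=U_{C,j}(v,\bar v)$ defined on a neighborhood of $\mathrm e^{\mathrm i t}\in\mathbb C$ in $\mathbb C$, with $U_{C,j}=O(\lambda)$; compactness of $[0,2\pi]$ and uniform separation of strands let these graphs be chosen with uniform domain size.

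Finally, to compute $\Sigma_\lambda\cap S^3$ inside $N$, fix a sheet $U_{C,j}$ and an angle $t$, and solve the real equation
\begin{equation*}
h_{C,j,t}(r) \;:=\; \bigl|U_{C,j}(r\mathrm e^{\mathrm i t},r\mathrm e^{-\mathrm i t})\bigr|^2+r^2-1 \;=\; 0
\end{equation*}
for $r$ near $1$. Since $h'_{C,j,t}(1)=2+O(\lambda^2)\neq 0$, the IFT produces a unique smooth $r_{C,j}(t)=1-O(\lambda^2)$. As $t$ runs through $[0,2\pi]$ the points $\bigl(U_{C,j}(r_{C,j}(t)\mathrm e^{\mathrm i t},r_{C,j}(t)\mathrm e^{-\mathrm i t}),\,r_{C,j}(t)\mathrm e^{\mathrm i t}\bigr)$ sweep out the components of $\Sigma_\lambda\cap S^3$ (the sheets account for all $s$ local branches, and the previous step ensures there are no other zeros). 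The straight-line homotopy $(u_{C,j}(t),\mathrm e^{\mathrm i t})\leadsto (U_{C,j}(r_{C,j}(t)\mathrm e^{\mathrm i t},\ldots),r_{C,j}(t)\mathrm e^{\mathrm i t})$ proceeds through embeddings within the solid torus $N$ and extends to an ambient isotopy of $S^3$, identifying $\Sigma_\lambda\cap S^3$ with the standard braid closure of $B$.

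The main technical obstacle I anticipate is making the implicit-function arguments uniform in $t$ and in the sheet index $(C,j)$, so that local graphs glue to a global parametrization. This is handled by the compactness of $[0,2\pi]$ together with a uniform lower bound on the strand separation produced by the trigonometric parametrization, which makes the $O(\lambda)$ and $O(\lambda^2)$ estimates above independent of $t$ and $(C,j)$.
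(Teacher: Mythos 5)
Your proof is correct and takes essentially the same route as the argument in \cite{bode:2016polynomial} that this paper only summarizes: the scaling $u_{\lambda,j,t}(r)=\lambda u_{1,j,t}(r)$ confines all intersections with $S^3$ to a thin annulus near $|v|=1$, where the simple roots on the torus extend by the implicit function theorem to disjoint sheets, each meeting $S^3$ transversally exactly once and isotopic within its sheet to the corresponding strand of $B$. The one sentence you should strike is the claim that transversality ``persists'' under $C^k$-perturbation from $f_0=u^s$, whose gradient vanishes identically on $\{u=0\}$ for $s\ge 2$, so no such persistence argument applies; this is harmless, because your later computation $h'_{C,j,t}(1)=2+O(\lambda^2)\neq 0$, together with the fact that every point of $\Sigma_\lambda\cap S^3$ lies on one of the $s$ sheets, already establishes transversality.
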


Now we consider a slightly different construction, where instead of replacing every instance of $\rme^{\rmi t}$ in $g_{\lambda}$ by $v$ and every instance of $\rme^{-\rmi t}$ by $\overline{v}$, we replace $\rme^{\rmi t}$ by $v$ and $\rme^{-\rmi t}$ by $\tfrac{1}{v}$. We call the resulting function $p_{\lambda}$. In contrast to $f_{\lambda}$, the function $p_{\lambda}$ is not a semiholomorphic polynomial, but a rational function, where the denominator is some power of $v$ and its numerator $\tilde{f}_{\lambda}$ is a polynomial in $u$ and $v$ and hence holomorphic. 

The largest exponent of $v$ and $\overline{v}$ in $f_{\lambda}$ is $\sum_{C\in\mathfrak{C}}\max\{N_C,M_C\}$ and the degree of the corresponding monomial with respect to $u$ is 0. 
It follows that $\tilde{f}_{\lambda}(u,v)\neq 0$ if $v=0$ and therefore the zero level set of 
$\tilde{f}_{\lambda}$ is identical to the zero level set of $p_{\lambda}$. We know that the intersection of the zero level set of $\tilde{f}_{\lambda}$ and $S^3$ is a transverse $\mathbb{C}$-link (or equivalently quasipositive) as long as the intersection is transverse (which it is for sufficiently small values of $\lambda$), since $\tilde{f}_{\lambda}$ itself is a complex polynomial. In particular, if we started with the parametrisation of a braid that does not close to a quasipositive link, then $p_{\lambda}^{-1}(0)\cap S^3$ is not the desired link $L$ even when $\lambda$ is small.

This might be a bit surprising at first, since $p_{\lambda}$ shares many crucial properties of $f_{\lambda}$, of which we know that $f_{\lambda}^{-1}(0)\cap S^3$ is the desired link. For example we have that $p_{\lambda}(u,\rme^{\rmi t})=f_{\lambda}(u,\rme^{\rmi t})=g_{\lambda}(u,t)$. Which part of the proof in \cite{bode:2016polynomial} is it then that does not work in this setting?


Before we move on, we should briefly recapitulate what can go wrong in the construction in \cite{bode:2016polynomial} if $\lambda$ is not small enough. For every $r\in[0,1]$ and $t\in[0,2\pi]$ the $s$ (not necessarily distinct) roots of the complex polynomial $f_{\lambda}(u,r\rme^{\rmi t})$ are denoted by $u_{\lambda,j}(r,t)$, $j=1,2,\ldots,s$, and for every fixed $t$ the roots $(u_{\lambda,j,t}(r),r\rme^{\rmi t})=(u_{\lambda,j}(r,t),r\rme^{\rmi t})$ form parametric curves in $\mathbb{C}^2$ parametrised by $r$. Note that $u_{\lambda,j,t}(r)=\lambda u_{1,j,t}(r)$.

There are three different problems that can occur.
\begin{enumerate}[(i)]
\item One such curve $(u_{\lambda,j,t}(r),r\rme^{\rmi t})$ does not intersect $S^3$ at all.
\item Both $(u_{\lambda,j,t}(r),r\rme^{\rmi t})$ and $(u_{\lambda,k,t}(r),r\rme^{\rmi t})$ with $j\neq k$ intersect $S^3$, say at values $r_1$ and $r_2$, but the images of $[r_1,1]$ under $r\mapsto u_{\lambda,j,t}(r)$ and of $[r_2,1]$ under $r\mapsto u_{\lambda,k,t}(r)$ are not disjoint in $\mathbb{C}$.
\item One curve $(u_{\lambda,j,t}(r),r\rme^{\rmi t})$ intersects $S^3$ more than once.
\end{enumerate}  
In the construction in \cite{bode:2016polynomial} all of these problems can be resolved by making $\lambda$ small. We can choose $\lambda$ small enough that for every $j$ and $t$ the modulus $|u_{\lambda,j,t}(r)|=\lambda|u_{1,j,t}(r)|$ satisfies $|u_{\lambda,j,t}(r)|^2+r^2<1$ for some $r\in(0,1)$. Since $|u_{\lambda,j,t}(1)|^2+1^2\geq1$, every curve $(u_{\lambda,j,t}(r),r\rme^{\rmi t})$ must therefore intersect $S^3$, possibly at $r=1$ (which resolves (i)). Moreover, if we choose $\lambda$ small enough, all such intersections occur at some $r\in(1-\delta,1]$ for any given $\delta>0$. This $\delta$ can be chosen such that (ii) and (iii) are also avoided. The details of this can be found in \cite{bode:2016polynomial}, but all that is really used is that distinct roots of a polynomial are smooth functions of its coefficients. We now prove our main result.

\begin{figure}
\centering
\labellist
\large
\pinlabel a) at 30 200
\pinlabel $S^3$ at 110 150
\pinlabel $|v|=0$ at 40 0
\pinlabel $|v|=1$ at 145 0
\endlabellist
\includegraphics[height=3.5cm]{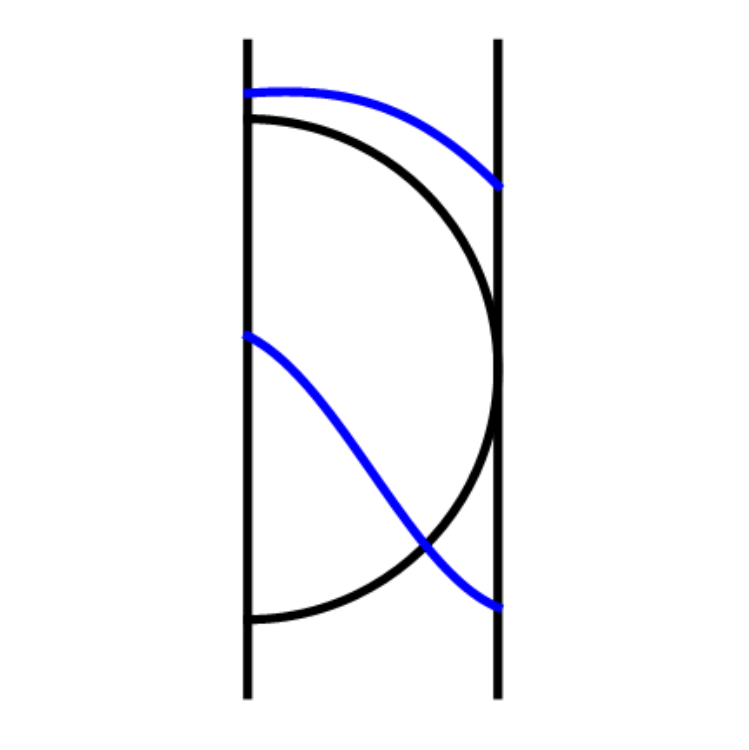}
\labellist
\large
\pinlabel b) at 30 200
\pinlabel $S^3$ at 110 35
\pinlabel $|v|=0$ at 50 0
\pinlabel $|v|=1$ at 155 0
\endlabellist
\includegraphics[height=3.5cm]{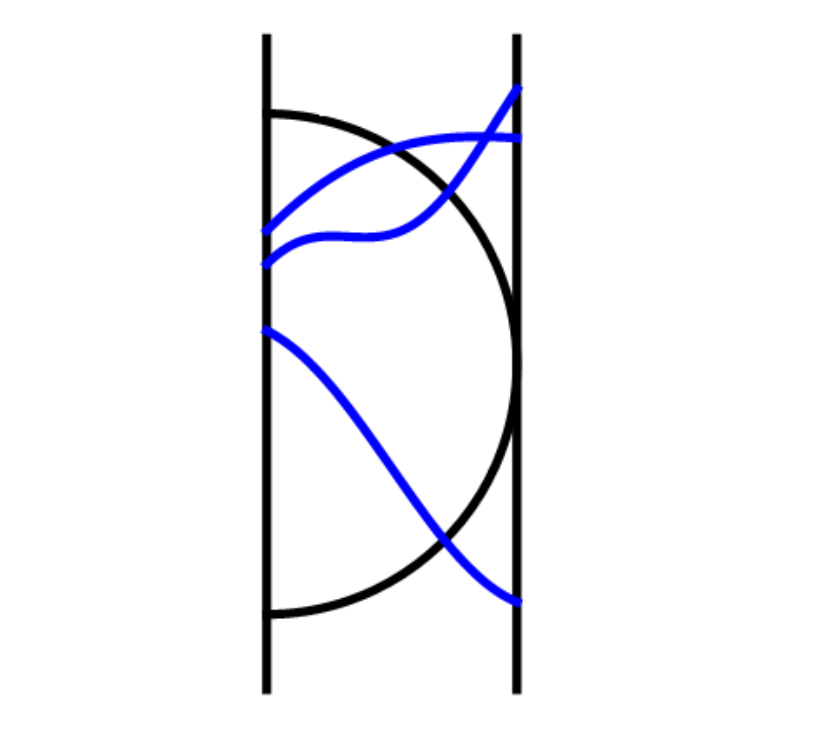}
\labellist
\large
\pinlabel c) at 30 200
\pinlabel $S^3$ at 120 185
\pinlabel $|v|=0$ at 45 0
\pinlabel $|v|=1$ at 145 0
\endlabellist
\includegraphics[height=3.5cm]{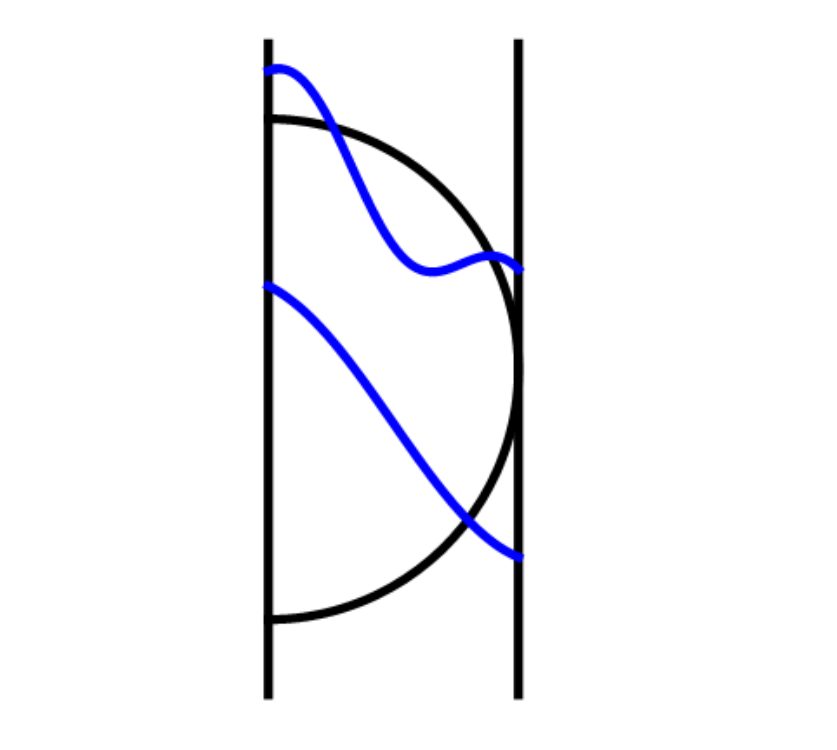}
\caption{2-dimensional sketches of the three different problems that can arise when $\lambda$ is not small enough. The vertical lines represent the planes in $\mathbb{C}^2$ where $|v|=0$ and $|v|=1$ respectively and a fixed value of $\arg v=t$. The half circle is the intersection of $S^3$ with $\arg v=t$ for a fixed $t$. The blue curves show the curves $u_{\lambda,j,t}(r)$. a) One of the curves $u_{\lambda,j,t}(r)$ does not intersect $S^3$. b) The $u$-coordinate of two curves coincide between their intersection points with $S^3$ and with $|v|=1$. c) One of the curves $u_{\lambda,j,t}(r)$ intersects $S^3$ multiple times.}
\end{figure}

\begin{figure}
\centering
\labellist
\large
\pinlabel $S^3$ at 120 185
\pinlabel $|v|=0$ at 35 0
\pinlabel $|v|=1$ at 145 0
\endlabellist
\includegraphics[height=3.5cm]{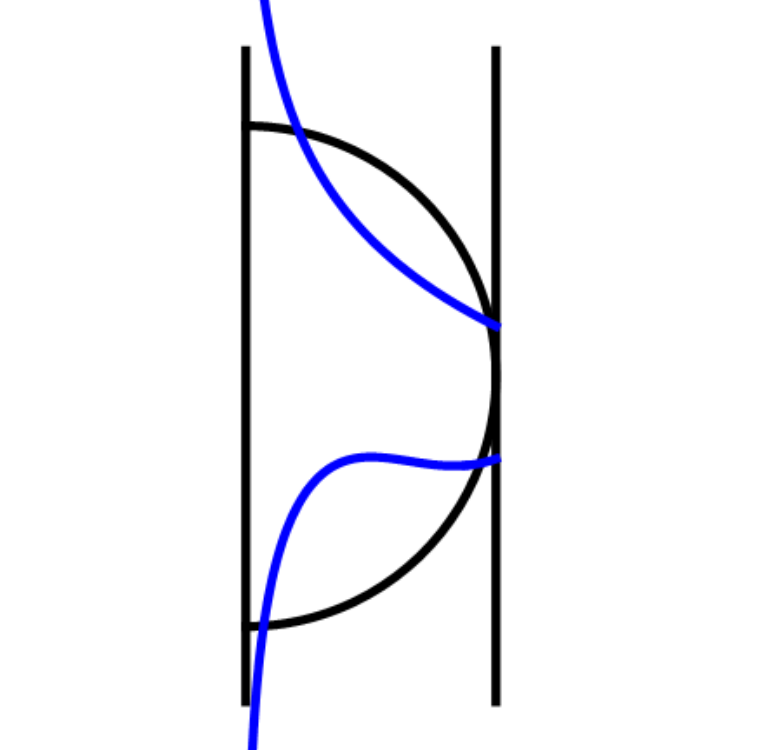}
\caption{The $u$-coordinates of the roots of $\tilde{f}_{a,b}(u,re^{it})$ go to infinity as $r$ goes to zero.}
\end{figure}

\begin{theorem}
\label{sublink}
For every link $L$ there exists a complex polynomial $\tilde{f}:\mathbb{C}^2\to\mathbb{C}$ such that $L$ is a sublink of the link formed by the transverse intersection of $\tilde{f}^{-1}(0)\cap S^3$, which is a satellite of the Hopf link. 
\end{theorem}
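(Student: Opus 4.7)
The construction is the one laid out in the paragraphs immediately preceding the statement. Given the braid $B$ on $s$ strands closing to $L$ with a trigonometric parametrisation (\ref{eq:trigpara}), form $g_\lambda(u,t)$ and set
\[
\tilde{f}_\lambda(u,v):=v^{N}g_\lambda(u,v,1/v),\qquad N:=\sum_{C\in\mathfrak{C}}\max\{N_C,M_C\},
\]
so that $\tilde{f}_\lambda\in\mathbb{C}[u,v]$ has $u$-degree $s$ with leading coefficient $v^{N}$ and $\tilde{f}_\lambda(u,\rme^{\rmi t})=\rme^{\rmi N t}g_\lambda(u,t)$. In particular the zero locus on $|v|=1$ is the scaled braid.

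Writing $g_\lambda(u,t)=\prod_{C,j}(u-\lambda\xi_{C,j}(t))$ with $\xi_{C,j}:=X_{C,j}+\rmi Y_{C,j}$ and performing the rescaling $u=\lambda w$, one checks that
\[
\tilde{f}_\lambda(\lambda w,v)=\lambda^{s}\,v^{N}\,\tilde{q}(w,v),
\]
where $\tilde{q}(w,v)$ is monic in $w$ of degree $s$, rational in $v$, and \emph{independent of $\lambda$}. The $s$ roots of $\tilde{f}_\lambda(\cdot,v)$ in $u$ are therefore $\lambda w_j(v)$, where the $w_j(v)$ are the $\lambda$-independent algebraic roots of $\tilde{q}$, and $w_j(\rme^{\rmi t})=\xi_{C,j}(t)$ recovers the braid at $|v|=1$. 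The intersection condition with $S^3$ is $\lambda^{2}|w_j(v)|^{2}+|v|^{2}=1$. Those branches on which $w_j$ extends holomorphically across $v=0$ (the \emph{bounded branches}) have $|w_j|$ uniformly bounded on $\overline{\mathbb{D}}$, so their intersections with $S^3$ are confined to a thin annulus $\{1-O(\lambda^{2})\le|v|\le 1\}$. On this annulus $1/v-\overline{v}=(1-|v|^{2})/v=O(\lambda^{2})$, so the coefficients of $\tilde{f}_\lambda$ and $f_\lambda$ differ by $O(\lambda^{2})$, and the analysis of \cite{bode:2016polynomial} carries over to show that each bounded branch meets $S^3$ transversally exactly once and the resulting collection is isotopic to $L$. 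This yields $L\subset\tilde{f}_\lambda^{-1}(0)\cap S^{3}$. The remaining branches (the \emph{unbounded branches}, where $w_j$ has a pole of order $k$ at $v=0$) produce additional intersections at $|v|=O(\lambda^{1/k})$ and contribute the extra components of $\tilde{L}:=\tilde{f}_\lambda^{-1}(0)\cap S^{3}$.

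For the satellite structure, use the Heegaard splitting $S^{3}=T_{1}\cup T_{2}$ along the Clifford torus, with $T_{1}=\{|v|\ge 1/\sqrt{2}\}$ and $T_{2}=\{|u|\ge 1/\sqrt{2}\}$ solid tori whose cores $\{u=0\}$ and $\{v=0\}$ form the Hopf link. For $\lambda$ small, the $L$-part of $\tilde{L}$ lies arbitrarily close to $\{u=0\}\subset T_{1}$ while the extra components lie arbitrarily close to $\{v=0\}\subset T_{2}$, so $\tilde{L}$ is contained in a regular neighborhood of the Hopf link and is a satellite of it. Transversality of $\tilde{f}_\lambda^{-1}(0)$ with $S^{3}$ is a generic condition on $\lambda$; once satisfied, $\tilde{L}$ is a transverse $\mathbb{C}$-link and hence quasipositive by the Rudolph--Boileau--Orevkov theorem. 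The main obstacle is the coordinated choice of $\lambda$: it must be small enough to keep the bounded-branch crossings inside the annulus where $\tilde{f}_\lambda\approx f_\lambda$, small enough to push the unbounded-branch crossings well inside $T_{2}$ and away from the $L$-part, and generic enough that all crossings are transverse. The rescaling $u=\lambda w$ decouples $\lambda$ from the intrinsic algebraic data $w_j(v)$, after which a single suitable choice of $\lambda$ is available.
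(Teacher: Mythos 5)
Your overall strategy matches the paper's (rescale $u=\lambda w$, confine the crossings coming from the braid to a thin annulus near $|v|=1$ where $\tilde f_\lambda$ is an $O(\lambda^2)$ perturbation of $f_\lambda$, push the remaining crossings toward $v=0$, and read off the Hopf-satellite structure from the two solid tori $\{|v|\ge 1/\sqrt2\}$ and $\{|u|\ge 1/\sqrt2\}$). But the central mechanism you use to produce $L$ --- the dichotomy between ``bounded branches'' (which you claim yield $L$) and ``unbounded branches'' (which you claim yield the extra components) --- is vacuous here, and this breaks the argument. By construction the highest power of $\overline v$ in $f_\lambda$ occurs only in the $u^0$ coefficient, so after clearing denominators $\tilde f_\lambda(u,0)$ is a nonzero constant \emph{independent of $u$}: every coefficient of $u^i$ with $i\ge 1$ vanishes at $v=0$ while the constant term does not. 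Hence \emph{all} $s$ root branches $w_j(v)$ blow up as $v\to 0$ (one sees this directly, or from the Newton polygon of $\tilde q$ in $w$); there are no branches extending holomorphically across $v=0$. As written, your argument therefore assigns every branch to the ``extra components'' class and produces no copy of $L$ at all. You can also see the inconsistency by counting: $L$ is the closure of an $s$-strand braid, so it must meet each half-disc $\{\arg v=t,\ |v|\approx 1\}$ in $s$ points, which would force all $s$ branches to be bounded --- leaving none for the extra components.

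The correct statement, which is the heart of the paper's proof, is that each of the $s$ (unbounded) branches crosses $S^3$ \emph{exactly twice}: once in the annulus $|v|\in(1-\delta,1]$ (because $|\lambda w_j(e^{\rmi t})|^2+1\ge 1$ while the point is inside $S^3$ for $|v|$ slightly smaller, and there your $O(\lambda^2)$-perturbation comparison with $f_\lambda$ does apply and gives uniqueness and transversality), and once more with $|v|\in(0,\epsilon]$ (because $|w_j|\to\infty$). The missing step you need is the intermediate-region estimate: $|w_j(v)|$ is bounded on the compact annulus $\epsilon\le|v|\le 1-\delta$, so for $\lambda$ small there are no crossings there, and the crossings near $|v|=1$ assemble into $L$ while those near $|v|=0$ form the extra link $L_0$. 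With that correction the rest of your write-up (the choice of $\lambda$, the satellite structure, and quasipositivity via Boileau--Orevkov) goes through as in the paper.
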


\begin{proof}
The first two problems (i) and (ii) can be resolved by choosing $\lambda$ small enough by exactly the same arguments as in \cite{bode:2016polynomial}.

The key difference between $f_{\lambda}$ and $\tilde{f}_{\lambda}$ is that $\tilde{f}_{\lambda}(u,0)$ is simply a constant (not depending on $u$). This means that $|u_{j,1,t}(r)|$ goes to infinity as $r$ goes to zero from above. Thus the third problem above can not be resolved by choosing $\lambda$ small enough as sketched above. No matter how small $\lambda$ is chosen, we can never achieve that all intersection points as above occur with $r\in(1-\delta,1]$ for a small enough $\delta$. However, as in \cite{bode:2016polynomial} small values of $\lambda$ imply that every curve $(u_{\lambda,j,t}(r),r\rme^{\rmi t})$ intersects $S^3$ in the interval $r\in(1-\delta,1)$ and inside this interval the interscetions are unique for each $j$ and $t$.

The modulus $|u_{1,j,t}(r)|$ of the roots is bounded on the compact set $r\in[\epsilon,1-\delta]$, where $\epsilon>0$ is chosen small. For small enough values of $\lambda$ there are therefore no intersections of $(u_{\lambda,j,t}(r),r\rme^{\rmi t})$ and $S^3$ with $r\in[\epsilon,1-\delta]$. Thus for $r=1$ the points $(u_{\lambda,j,t}(1),\rme^{\rmi t})$ are not in the open unit 4-ball in $\mathbb{R}^4$. As $r$ decreases each curve intersects $S^3$ in a unique point with $r\in[1-\delta,1]$. For $r\in[\epsilon,1-\delta]$ there are no further intersections, so all curves remain in the unit 4-ball. Since $|u_{j,1,t}(r)|$ goes to infinity as $r$ goes to zero, every curve must intersect $S^3$ at least one more time with $r\in(0,\epsilon]$. By the same arguments as in \cite{bode:2016polynomial} this intersection is unique in $(0,\epsilon]$ for all $j$ and $t$ if $\lambda$ is chosen small enough.

Hence for small enough $\lambda$ and every $t$ and $j$ the curve $(u_{j,\lambda,t}(r),r\rme^{\rmi t})$ intersects $S^3$ in exactly two points, one very close to $r=1$ and one close to $r=0$. Thus the link $\tilde{L}:=\tilde{f}_{\lambda}^{-1}(0)\cap S^3$ is the union of two links $L_1$ (the intersection points near $r=1$) and $L_0$ (the intersection points near $r=0$).

Everything else in the proof in \cite{bode:2016polynomial} still holds, which means that $L_1$ is the desired link $L$, the closure of the braid $B$, whose parametrisation we started with. The construction offers a concrete description of how $L$ is linked with the rest of $\tilde{L}$ (or with the notation of above how $L_1$ links with $L_0$). For small $\lambda$ the link $L_1$ lies in a tubular neighbourhood of $|v|=1$ in $S^{3}$ and $L_0$ lies in a tubular neighbourhood of $|u|=1$ in $S^3$. Therefore $L_1$ and $L_0$ each lie inside a solid torus and these two tori form a Hopf link. By construction $\deg_u f_{\lambda}=s$, which concludes the proof.  
\end{proof}

\begin{corollary}
\label{quasi}
For every link $L$ there exists a link $\tilde{L}$ such that $L$ is a sublink of $\tilde{L}$ and $\tilde{L}$ is a quasipositive satellite of the Hopf link.
\end{corollary}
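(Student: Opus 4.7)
The corollary is essentially an immediate consequence of Theorem \ref{sublink} combined with the Rudolph--Boileau--Orevkov characterization of transverse $\mathbb{C}$-links recalled in the introduction, so the plan is short. The strategy is to take the link $\tilde{L}=\tilde{f}^{-1}(0)\cap S^3$ produced by Theorem \ref{sublink} and simply verify that it satisfies both properties required by the corollary.

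The first property, namely that $L$ is a sublink of $\tilde{L}$ and that $\tilde{L}$ is a satellite of the Hopf link, is already the content of Theorem \ref{sublink}; no further work is needed. For the second property I would appeal to the equivalence \textit{transverse $\mathbb{C}$-link} $=$ \textit{quasipositive link} established by Rudolph \cite{rudolph83, rudolph84} and Boileau--Orevkov \cite{boileau}. Since $\tilde{f}$ is an honest holomorphic polynomial in $u$ and $v$ (this is precisely the advantage gained by the rational substitution $\rme^{-\rmi t}\mapsto 1/v$ over the semiholomorphic substitution used in \cite{bode:2016polynomial}) and $\tilde{f}^{-1}(0)$ meets $S^3$ transversally for the small $\lambda$ selected in the proof of Theorem \ref{sublink}, the link $\tilde{L}$ is by definition a transverse $\mathbb{C}$-link, and is therefore quasipositive.

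The only delicate point is to confirm transversality of the intersection $\tilde{f}^{-1}(0)\cap S^3$, and I expect this to be the main (very mild) obstacle. It is essentially already embedded in the proof of Theorem \ref{sublink}: for sufficiently small $\lambda$ the zero set meets each circle $\{|v|=r,\,\arg v=t\}$ in a fixed number of simple roots, and full transversality with $S^3$ follows by the same "roots depend smoothly on coefficients" argument imported from \cite{bode:2016polynomial}, possibly after a further mild shrinking of $\lambda$. Beyond this bookkeeping, the conclusion is automatic.
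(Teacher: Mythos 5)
Your proposal is correct and matches the paper's own argument: the paper likewise deduces the corollary in one line from Theorem \ref{sublink} together with the Boileau--Orevkov result that $f^{-1}(0)\cap S^3$ is quasipositive for any complex polynomial $f$. Your extra remark on transversality is a reasonable precaution but is already built into the statement of Theorem \ref{sublink}, which asserts the intersection is transverse.
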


This corollary follows from the theorem, since by Boileau and Orevkov \cite{boileau} $f^{-1}(0)\cap S^3$ is a quasipositive link for every complex polynomial $f$.

Rudolph explains in \cite{rudolph} that every link $L$ is the sublink of a quasipositive link $\tilde{L}$, which is a satellite of $L$. Theorem \ref{sublink} and Corollary \ref{quasi} first of all add another possibility for the quasipositive link $\tilde{L}$, which in our construction is not a satellite of $L$, but a satellite of the Hopf link. Perhaps more importantly, the constructive nature of the proof allows us to give a bound on the polynomial degree and use the resulting functions in the context of electromagnetic fields (cf. Section \ref{sec:EM}).

The degrees of the semiholomorphic polynomials $f_{\lambda}$ that are constructed in \cite{bode:2016polynomial} can be bounded in terms of the number of strands $s$ and the number of crossings $\ell$ of the parametrised braid. This is because the degrees of the polynomial are related to the degrees $M_C$ and $N_C$ of the trigonometric polynomials $F_C$ and $G_C$, that parametrise the $X$- and $Y$-coordinate, respectively. Since these functions are found by trigonometric interpolation their degree is determined by the number of data points used for the interpolation, which in turn depends on the number of strands and crossings of the chosen braid. Since the relevant steps of the construction of $\tilde{f}$ are identical to those in the construction of $f_{\lambda}$, these imply the following result.

\begin{proposition}[cf. Lemma 4.1, Lemma 4.2 and Corollary 4.3  in \cite{bode:2016polynomial}]
\label{prop:bound}
Let $B$ be a braid on $s$ strands with $\ell$ crossings and let $L$ be its closure. Then there exists a complex polynomial $\tilde{f}:\mathbb{C}^2\to\mathbb{C}$ such that $L$ is a sublink of the link formed by the transverse intersection of $f^{-1}(0)$ and the unit three-sphere $S^3$, $\deg_u \tilde{f}=s$,
\begin{equation}
\deg_v \tilde{f}=2\sum_{C\in\mathfrak{C}}\max\{N_C,M_C\}\leq 2\sum_{C\in\mathfrak{C}}\left
\lfloor\frac{(s_C+1)(\ell s_C-1)+\ell s_C(s-s_C)}{2}\right\rfloor,
\end{equation}
and
\begin{align}
\deg\tilde{f}\leq &\sum_{C\in\mathfrak{C}}\max\left\{\left
\lfloor\frac{(s_C+1)(\ell s_C-1)+\ell s_C(s-s_C)}{2}\right\rfloor,s_C\right\}\nonumber\\
&+\sum_{C\in\mathfrak{C}}\left
\lfloor\frac{(s_C+1)(\ell s_C-1)+\ell s_C(s-s_C)}{2}\right\rfloor,
\end{align}
where as before $\mathfrak{C}$ denotes the set of components of $L$ and for every $C\in\mathfrak{C}$ the number of strands that $C$ consists of is denoted by $s_C$.
\end{proposition}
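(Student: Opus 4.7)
The plan is to observe that the construction of $\tilde f$ differs from that of $f_\lambda$ in \cite{bode:2016polynomial} only by the substitution $\rme^{-\rmi t}\mapsto 1/v$ (in place of $\bar v$) followed by a multiplication by a power of $v$ to clear the denominator. This redistributes degree from the $\bar v$-direction into the $v$-direction but leaves the underlying trigonometric and symmetric-function analysis of \cite{bode:2016polynomial} intact. Consequently, all three bounds follow by transferring the degree computations of Lemmas~4.1, 4.2 and Corollary~4.3 of \cite{bode:2016polynomial} through this substitution.

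The $u$-degree assertion $\deg_u\tilde f=s$ is immediate: the starting polynomial $g_\lambda(u,t)=\prod_{C,j}(u-\lambda(X_{C,j}(t)+Y_{C,j}(t)))$ has $s$ linear factors in $u$ with $u$-free coefficients, and neither the substitution rule nor the clearing of denominators touches $u$.

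For $\deg_v\tilde f$, I would first determine the Laurent spectrum of $g_\lambda$ in $\rme^{\rmi t}$: each factor $(u-\lambda(X_{C,j}+Y_{C,j}))$ contributes frequencies of modulus at most $\max\{N_C,M_C\}/s_C$, and multiplying the $s_C$ factors of a single cycle raises the attainable top frequency for that component to $\max\{N_C,M_C\}$ (the fractional frequencies cancel by the $\mathbb{Z}/s_C$-symmetry of the reparametrisation, as in Lemma~4.1 of \cite{bode:2016polynomial}). Since different components contribute independently in the expanded product, the top and bottom exponents of $\rme^{\rmi t}$ in $g_\lambda$ are $\pm\sum_C\max\{N_C,M_C\}$. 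Substituting $v,1/v$ for $\rme^{\pm\rmi t}$ and multiplying by $v^{\sum_C\max\{N_C,M_C\}}$ then yields a genuine polynomial of $v$-degree $2\sum_C\max\{N_C,M_C\}$. Inserting the trigonometric-interpolation bound on $\max\{N_C,M_C\}$ from Lemma~4.2 of \cite{bode:2016polynomial} gives the stated inequality.

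For the total degree I would expand $g_\lambda$ as a sum of monomials $u^{s_C-m}\sigma_m$ component by component, where $\sigma_m$ is the $m$-th elementary symmetric function in the $\lambda(X_{C,j}+Y_{C,j})$ of that component. For each $m$ the $u$-degree plus top $\rme^{\rmi t}$-frequency is at most $(s_C-m)+m\max\{N_C,M_C\}/s_C$, which is maximised at $m=0$ or $m=s_C$, giving a per-component ceiling of $\max\{s_C,\max\{N_C,M_C\}\}$. Summing over components and then adding the universal shift $\sum_C\max\{N_C,M_C\}$ contributed by the clearing factor $v^{\sum_C\max\{N_C,M_C\}}$ produces exactly the two-summand bound in the statement; the interpolation bound from Lemma~4.2 is then applied inside the floors. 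The main obstacle is simply this per-component $u$-versus-$v$ trade-off bookkeeping, which is the same manipulation as in Corollary~4.3 of \cite{bode:2016polynomial}; since the substitution changes only the $v$-direction, the argument is inherited without additional combinatorial work.
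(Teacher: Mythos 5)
Your proposal is correct and follows essentially the same route as the paper, which simply observes that the construction of $\tilde f$ differs from that of $f_\lambda$ only in the substitution $\rme^{-\rmi t}\mapsto 1/v$ followed by clearing the denominator $v^{\sum_C\max\{N_C,M_C\}}$, and then imports the degree counts of Lemmas~4.1, 4.2 and Corollary~4.3 of \cite{bode:2016polynomial}. Your version actually spells out the bookkeeping (the per-component frequency analysis and the $u$-versus-$v$ trade-off) that the paper leaves implicit, and it matches the stated bounds exactly.
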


In the case of a knot, there is only one component so that the bound simplifies to
\begin{equation}
\deg\tilde{f}\leq 
2\left\lfloor\frac{(s+1)(\ell s-1)}{2}\right\rfloor.
\end{equation}

Also note that it follows immediately from the construction that the number of components of $\tilde{L}$ that are not part of $L$ is bounded above by the number of strands of $B$.

\noindent\textbf{Example:} Consider the figure-eight knot, the closure of the braid $\sigma_1\sigma_2^{-1}\sigma_1\sigma_2^{-1}$ parametrised by 
\begin{equation}
\bigcup_{j=1}^3\left(\cos\left(\frac{2t+2\pi j}{3}\right),\sin\left(\frac{2(2t+2\pi j)}{3}\right),t\right), \qquad t\in[0,2\pi].
\end{equation}
Note that the figure-eight knot is not quasipositive \cite{rudolph:quasi}.
The corresponding braid polynomial is
\begin{align}
g_{\lambda}(u,t)&=\prod_{j=1}^3\left(u-\lambda\left(\cos\left(\frac{2t+2\pi j}{3}\right)+\rmi\sin\left(\frac{2(2t+2\pi j)}{3}\right)\right)\right)\nonumber\\
&=u^3-\frac{3}{4}u\lambda^2(\rme^{2\rmi t}-\rme^{-2\rmi t})-\frac{1}{8}(4\lambda^3(\rme^{2\rmi t}+\rme^{-2\rmi t})+\lambda^3(\rme^{4\rmi t}-\rme^{-4\rmi t})).
\end{align}
Replacing each $\rme^{\rmi t}$ by $v$ and every $\rme^{-\rmi t}$ by $\tfrac{1}{v}$ we obtain
\begin{align}
p_{\lambda}(u,v)&=u^3-\frac{3}{4}u \lambda^2(v^2-\frac{1}{v^2})-\frac{1}{8}(4\lambda^3(v^2+\frac{1}{v^2})+\lambda^3(v^4-\frac{1}{v^4})\nonumber\\
&=\frac{1}{8v^4}\left(8u^3v^4-6u\lambda^2(v^6-v^2)-4\lambda^3(v^6+v^2)-\lambda^3(v^8-1)\right).
\end{align}
Therefore
\begin{equation}
\tilde{f}_{\lambda}(u,v)=(8u^3v^4-6u\lambda^2(v^6-v^2)-4\lambda^3(v^6+v^2)-\lambda^3(v^8-1).
\end{equation}
Figure \ref{fig:holo} shows the zero level set of $\tilde{f}_{1/3}$ on the unit three-sphere after projecting it into $\mathbb{R}^3$. One component is the figure-eight knot. The other components lie close to the $z$-axis and close near the point at infinity. Note that the number of strands of the components that are not the figure-eight knot (in this case 4) could be larger than the number of strands of the constructed knot (in this case 3), but the number of extra components (in this case 2) is bounded above by the number of strands.


As pointed out in \cite{bode:2016polynomial} the bound on the degree is not very tight. The upper bound from Proposition \ref{prop:bound} is 44, while the actual degree is 8.

\begin{figure}
\centering
\labellist
\large
\pinlabel a) at 0 400
\endlabellist
\includegraphics[height=5.5cm]{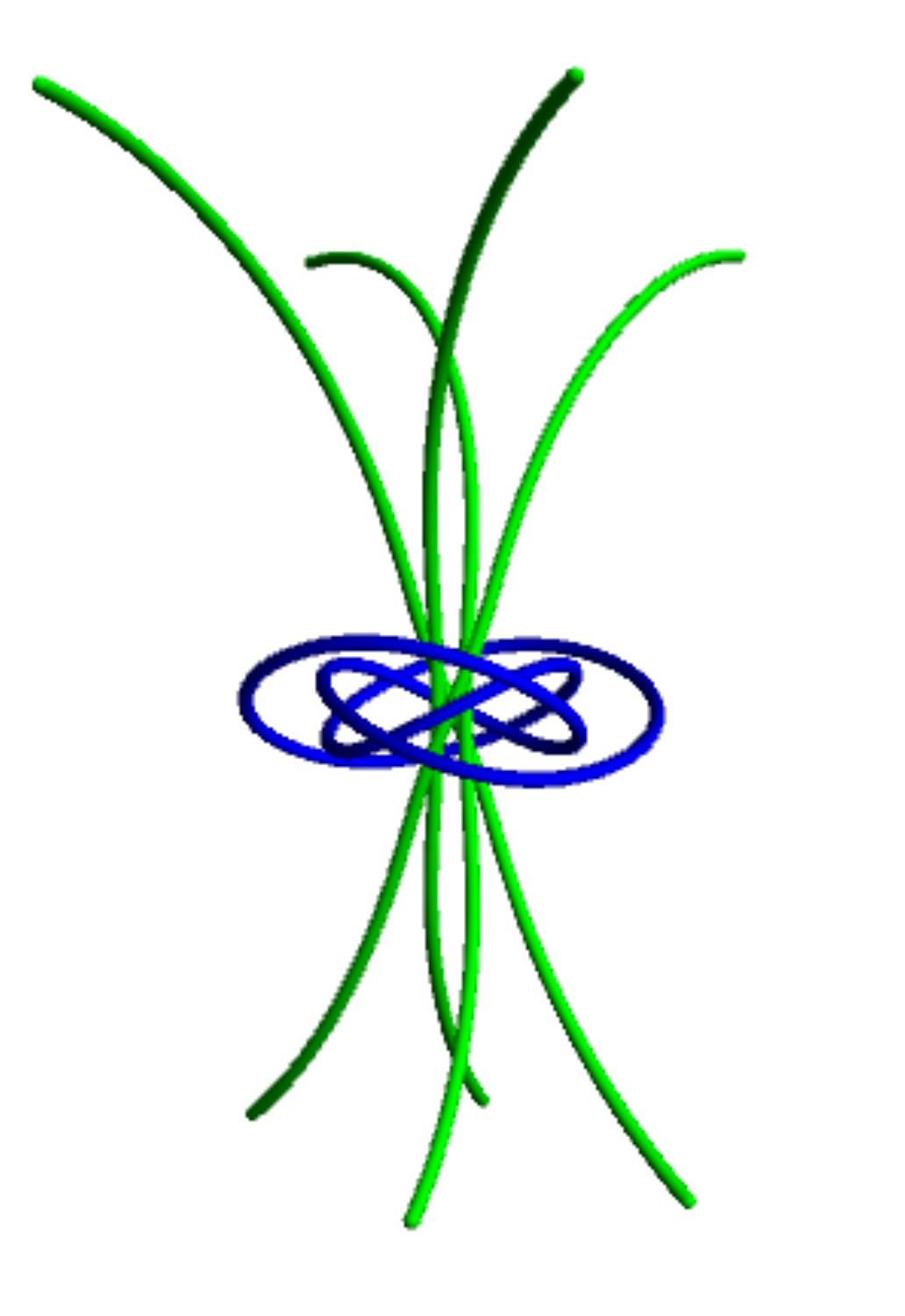}\qquad
\labellist
\large
\pinlabel b) at 50 340
\endlabellist
\includegraphics[height=4.5cm]{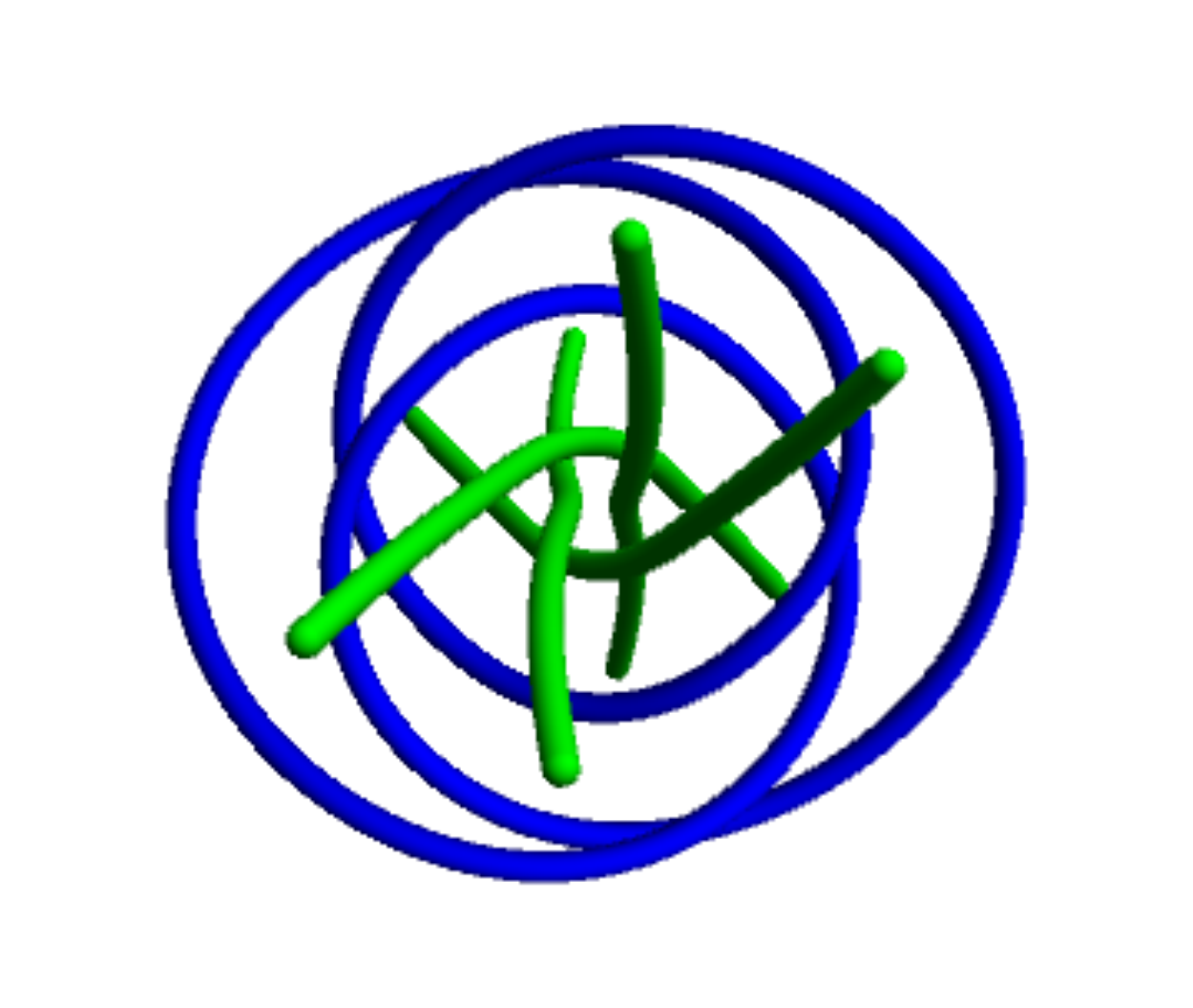}
\caption{The zero level set of the complex polynomial $\tilde{f}_{1/3}$ on the unit three-sphere projected into $\mathbb{R}^3$. The figure-eight knot (in blue) lies in a solid torus, whose core is the unit circle in the $z=0$-plane. The other components (in green) lie close to the $z$-axis. \label{fig:holo}}
\end{figure}

\section{Knotted null lines in Maxwell's equations}
\label{sec:EM}

Recent years have seen a growing interest in configurations of physical systems that contain knots and links. Knots have been found in very diverse areas such as quantum mechanics \cite{berry:2001hydrogen}, optics \cite{bd:2001knotted, mark}, non-linear field theories in particle  physics \cite{sutcliffe}, topological fluid dynamics \cite{daniel1, daniel2, lr:2012jones, moffatt:1969degree},  and liquid crystals \cite{km:2016topology, ma:2014knotted, ma:2016global}.
In scalar complex fields knots arise as the nodal set of the field, while systems that are described by 3d vector fields can contain knotted flow lines or vortex knots.
There are two main approaches to the study of knotted fields: the statistical analysis of the knots that occur in these fields naturally \cite{td} and the constructive approach, which aims to construct such field configurations for given knot types \cite{mark}.
The remarkable thing is that despite the great variety of different physical systems, many of the constructions above are in one way or another based on finding polynomial maps from 3-dimensional space to the complex numbers whose vanishing set is a given knot, such as the complex plane curves for algebraic links studied by Brauner \cite{brauner} and Milnor \cite{milnor} composed with a stereographic projection. This was the original motivation for the construction in \cite{bode:2016polynomial}, which offers us the maximal freedom in the choice of link we can construct.

If the system is not taken to be static, these maps can be used as initial conditions for the time evolution of the system.
The dynamics of the system is then governed by a differential equation or a given energy functional that needs to be minimised. As the system evolves, so does the knot. Typically, intersections occur, parts of the knotted curve pass through each other and the knot type changes and potentially disappears altogether.
While in most physical systems we do not expect the fact that the polynomials constructed in Section \ref{sec:constr} are holomorphic to have any effect on the time evolution of the knot or link, the case of electromagnetism is different. Here the Cauchy-Riemann equations can be linked to Maxwell's equations, which determine the time evolution of the field.

An electromagnetic field $\mathbf{F}_t$ is a family of vector fields $\mathbf{F}_t=\mathbf{E}_t+\rmi \mathbf{B}_t:\mathbb{R}^3\to\mathbb{C}^3$, where $t$ is the time parameter, $\mathbf{E}_t:\mathbb{R}^3\to\mathbb{R}^3$ is the time-dependent electric field and $\mathbf{B}_t:\mathbb{R}^3\to\mathbb{R}^3$ is the time-dependent magnetic field. Together they satisfy Maxwell's equations in free space:
\begin{align}
\nabla \cdot \mathbf{B}_t&=0,\label{eq:maxwella}\\
\nabla \times \mathbf{E}_t+\partial_{t} \mathbf{B}_t &=0,\label{eq:maxwellb}\\
\nabla \cdot \mathbf{D}_t&=0,\label{eq:maxwellc}\\
\nabla \times \mathbf{H}_t-\partial_{t} \mathbf{D}_t&=0,\label{eq:maxwelld}
\end{align} 
where $\mathbf{H}_t=\frac{1}{\mu_0}\mathbf{B}_t$, $\mathbf{D}_t=\tfrac{1}{\varepsilon_{0}}\mathbf{E}_t$, $\mu_0$ and $\varepsilon_0$ are the electric and magnetic permittivity respectively and and $\nabla$ is the gradient with respect to the three spatial coordinates.

There are different ways in which an electromagnetic field can be knotted. Links can be formed by the flow lines (field lines) of the electric field $\mathbf{E}_t$ and the magnetic field $\mathbf{B}_t$. Furthermore, the null lines of the electromagnetic field $\mathbf{F}_t^{-1}(0,0,0)$ can be knotted or linked. Sometimes these links are referred to as {\it{vortex knots}} \cite{bouwmeester}, but we cannot always guarantee that the vector fields twist around all lines with $\mathbf{F}_t^{-1}(0,0,0)$ like they do for vortex knots in fluids. Instead the null lines should be thought of as the vortex lines of an electromagnetic phase function as in \cite{phasesing}, where these lines are called RS vortices.

In this section we show how the holormophicity of the polynomials in Section \ref{sec:constr} leads to time-dependent complex-valued vector fields $\mathbf{F}_t$ that satisfy Maxwell's equations and have RS vortex lines $\mathbf{F}_t^{-1}(0,0,0)$ that contain the desired link $L$ for all time $t$. The link type $L$ is thus a stable topological feature of the electromagnetic field. 

Connection between complex polynomials and electromagnetic fields are due to Bateman \cite{bateman} and Ra{\~n}ada \cite{ranada} and employed in \cite{kedia1}, where Kedia et al. give concrete examples of constructions of electromagnetic fields with knotted field lines. In their work the field lines take the shape of torus knots and links.  

They consider the following time-dependent inverse stereographic projection from $\mathbb{R}^3\cup\{\infty\}$ to $S^{3}$
\begin{equation}
\label{eq:stereo3}
u=\frac{x^2+y^2+z^2-t^2-1+2\rmi z}{x^2+y^2+z^2-(t-\rmi)^2}\qquad \text{and}\qquad v=\frac{2(x-\rmi y)}{x^2+y^2+z^2-(t-\rmi)^2},
\end{equation}
where $x$, $y$ and $z$ are the three spatial coordinates and $t$ represents time. We can thus write the electromagnetic field in terms of $u$ and $v$.


They go on to show that the topology of the field lines of 
\begin{equation}
\label{eq:electromag}
\mathbf{F}(u,v)=\nabla f(u,v)\times \nabla g(u,v)
\end{equation}
does not change with time, where $f$ and $g$ are arbitrary holomorphic functions in $u$ and $v$. They also point out that for $f=u^p$ and $g=v^q$ the resulting electric and magnetic fields both contain field lines that form the $(p,q)$-torus link. Hence there is a construction of flow lines in the shape of torus links that are stable for all time. It should also be noted that the mathematics is completely equivalent to that of the time evolution of unbreakable filaments in fluid flows \cite{kedia1, kedia2}. The holomorphicity of $f$ and $g$ ensures that the constructed electromagnetic fields are null for all time $(\mathbf{E}_t\cdot \mathbf{B}_t=0)$. Then the field lines of $\mathbf{E}_t$ and $\mathbf{B}_t$ evolve like filaments in a fluid that are pushed in the direction of the Poynting vector (parallel to $\mathbf{E}_t\times\mathbf{B}_t$).

In these configurations field lines lie on nested tori around the torus links. Therefore any other periodic flow line must give a cable of torus link. 
A more general construction of knotted flow lines is still an open and very challenging problem. Other constructions have been suggested in recent years \cite{hoyos}, but to our knowledge no knot outside the family of torus links (or possible cables of these) has been constructed. Kedia et al. found conditions that guarantee that initial configurations $\mathbf{F}_0$ have topologically stable flow lines \cite{kedia2}, but so far this has not led to more constructions.

The field $\mathbf{F}_t$ in Equation (\ref{eq:electromag}) can be rewritten as
\begin{equation}
\label{eq:h}
\mathbf{F}_t(u,v)=h(u,v) (\nabla u \times \nabla v),
\end{equation}
where $h(u,v)=\partial_u f \partial_v g-\partial_v f \partial_u g$. From this expression, which is given in \cite{kedia1}, it is not too hard to see that the null lines of $\mathbf{F}_t$ are given by $h^{-1}(0)$, since $\nabla u \times \nabla v$ is never equal to $(0,0,0)$. Furthermore, the link type of $\mathbf{F}_{t}^{-1}(0,0,0)=h^{-1}(0)$ does not change over time.

We can thus construct electromagnetic fields that have topologically stable null lines of link type $L$ by finding complex polynomials (holomorphic in $u$ and $v$) that have $L$ as their nodal set on $S^3$. This was pointed out for the case of algebraic links by de Klerk et al. \cite{bouwmeester} by using the projection maps $(\varepsilon u, \varepsilon v):\mathbb{R}^3\to S_{\varepsilon}^3$ for some small $\varepsilon>0$ and the corresponding Milnor polynomials. However, neither the isolated singularity of the corresponding polynomials nor the property of having the desired link as the nodal set on 3-spheres of small radii $\varepsilon$ is required. The correct set of links to consider is therefore not the set of algebraic links, but rather Rudolph's transverse $\mathbb{C}$-links. It follows from Theorem \ref{sublink} and Corollary \ref{quasi} that every link is the subset of a stable vortex link of an electromagnetic field, which is summarised in Theorem \ref{thm:main2}.

Section \ref{sec:constr} provides us with an explicit construction of the required polynomials $h:\mathbb{C}^2\to\mathbb{C}$. We simply set $h=\tilde{f}$ and obtain the electromagnetic field from Equation (\ref{eq:h}).

Vortex knots haven also been studied in optical fields that are described by complex scalar fields $\Psi:\mathbb{R}^3\to\mathbb{C}$ satisfying the paraxial wave equation \cite{mark}. In certain regimes this is a good approximation to Maxwell's equations. In fact, the trefoil knot and the figure-eight knot have been realised experimentally in laser beams \cite{mark}.

It should however be noted that the electromagnetic fields that we construct as above should not be expected to be monochromatic, a shortcoming that they share with the fields of de Klerk et al \cite{bouwmeester}. Hence an experimental realisation of these fields as laser beams with methods as in \cite{mark} does not seem likely at the moment.

\section{Some remarks on knotted field lines in electromagnetic fields}
\label{sec:flow}

In this section we would like to point out some observations on the construction of knotted field lines in electromagnetic fields. To our knowledge there is no construction of stable knotted field lines, whose knot types are not torus knots or perhaps cables of torus knots (using the same fields) \cite{kedia1}. An overview on the topic of electromagnetic knots can be found in \cite{emreview}, which contains the relevant references.

Given that the fields $\mathbf{F}_t(u,v)=\nabla u^p\times\nabla v^q$ in \cite{kedia1}, whose electric and magnetic fields have field lines in the shape of $(p,q)$-torus links, are clearly inspired by Brauner's and Milnor's polynomial $u^p-v^q$, which intersects $S^3$ in a $(p,q)$-torus link, one might hope that the construction of complex polynomials in Section \ref{sec:constr} could be useful for the construction of knotted field lines too.

If this is the case, it is certainly not straightforward. It is important to keep in mind that the $(p,q)$-torus link that can be found as a closed field line in $\mathbf{F}_t(u,v)$ is actually the mirror image of the $(p,q)$-torus link given by the intersection of the nodal set of $u^p-v^q$ with $S^3$. For $F_t(u,v)=\nabla f\times\nabla g$ the functions $\text{Re}(fg)$ and $\text{Im}(fg)$ are constant along the field lines of the magnetic field and the electric field, respectively. The important feature of the knotted field lines therefore seems not to be that it is related to (the mirror image of) the nodal set of a holomorphic function on $S^3$, but rather that it is the preimage set of a locally extremal value of the functions $\text{Re}(fg)$ and $\text{Im}(fg)$.
All of this is explained in far more detail in \cite{kedia1}. The important point here is that even though Brauner's polynomial inspired the fields in \cite{kedia1}, we should not be too optimistic about new configurations with knotted field lines coming from the construction in Section \ref{sec:constr}.

However, there is another approach to the construction of knotted field lines in electromagnetism and again it is closely related to holomorphic functions as it also employs Bateman's construction. Recall that the fields in Bateman's construction take the form 
\begin{equation}
\mathbf{F}_t(u,v)=h(u,v)\nabla u\times\nabla v.
\end{equation}
The projection maps $u$ and $v$ are given by Equation (\ref{eq:stereo3}) and we are looking for the holomorphic function $h$. The key observation is that for our choice of $u$ and $v$ the vector fields $\text{Re}(\nabla u \times \nabla v)$ and $\text{Im}(\nabla u \times \nabla v)$, the part that is the same for all of the fields, span a plane field $\xi_t$ that defines a contact structure on $\mathbb{R}^3$ for every $t$. This means that $\text{Re}(\nabla u \times \nabla v)$ and $\text{Im}(\nabla u \times \nabla v)$ are linearly independent over $\mathbb{R}$ for all points in $\mathbb{R}^3$ and all time $t$ and
\begin{equation}
\alpha_t\wedge\text{d}\alpha_t\neq0,
\end{equation}
where $\alpha_t$ is the 1-form such that for all time $t$
\begin{equation}
\text{ker }\alpha_t=\xi_t=span(\{\text{Re}(\nabla u \times \nabla v),\text{Im}(\nabla u \times \nabla v)\}).
\end{equation}

In fact, $(\text{Re}(\nabla u \times \nabla v)$ and $\text{Im}(\nabla u \times \nabla v)$ are always perpendicular everywhere and $\alpha_t\wedge\text{d}\alpha_t$ is given by
\begin{equation}
\alpha_t\wedge\text{d}\alpha_t=1024\frac{(1+t^2+x^2+y^2-2tz+z^2)^3}{(t^4-2t^2(x^2+y^2+z^2-1)+(x^2+y^2+z^2+1)^2)^6},
\end{equation}
which never vanishes. Note that we have a contact structure for all values of $t$, so the time evolution as dictated by Maxwell's equations is actually a 1-parameter family of contactomorphisms.

\begin{lemma}
\label{lem:leg}
A knotted field line of the electric or the magnetic field of a field constructed with Bateman's construction using the projection maps $u$ and $v$ as in Eq. (\ref{eq:stereo3}) is Legendrian with respect to the corresponding contact structure $\xi_t$ for all $t$.
\end{lemma}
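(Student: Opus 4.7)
The plan is to unravel the Bateman ansatz into its real and imaginary parts, observe that each of them is a real linear combination of the two vectors that span $\xi_t$, and then read off the Legendrian condition from the definition of a field line.

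First I would write $h=h_R+\rmi h_I$ with real-valued $h_R,h_I$ and decompose
\begin{equation}
\nabla u\times\nabla v=\mathbf{A}_t+\rmi\mathbf{C}_t,\qquad \mathbf{A}_t\defeq\text{Re}(\nabla u\times\nabla v),\ \mathbf{C}_t\defeq\text{Im}(\nabla u\times\nabla v).
\end{equation}
By the definition of $\xi_t$ given just before the lemma, $\xi_t=\text{span}_{\mathbb{R}}(\mathbf{A}_t,\mathbf{C}_t)$, and the paper has already verified that these two vectors are pointwise linearly independent, so that $\xi_t$ is genuinely a $2$-plane field.

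Next I would expand
\begin{equation}
\mathbf{F}_t=h(u,v)\,(\nabla u\times\nabla v)=(h_R\mathbf{A}_t-h_I\mathbf{C}_t)+\rmi(h_I\mathbf{A}_t+h_R\mathbf{C}_t),
\end{equation}
so that both the real part and the imaginary part of $\mathbf{F}_t$, which are (in whichever convention one adopts for $\mathbf{F}_t$) the electric and the magnetic field, are real linear combinations of $\mathbf{A}_t$ and $\mathbf{C}_t$ with coefficients $h_R,h_I$. Hence $\mathbf{E}_t(p),\mathbf{B}_t(p)\in\xi_t|_p$ at every point $p\in\mathbb{R}^3$ and every time $t$.

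Finally, since a field line of $\mathbf{E}_t$ (respectively $\mathbf{B}_t$) has tangent vector proportional to $\mathbf{E}_t$ (respectively $\mathbf{B}_t$) at each of its points, the tangent vectors of such curves lie in $\xi_t$, which is precisely the Legendrian condition. I do not anticipate a serious obstacle here: the statement is essentially a tautological consequence of the fact that $h$ is a complex scalar and $\xi_t$ is, by construction, the real $2$-plane underlying the complex line $\mathbb{C}\cdot(\nabla u\times\nabla v)$. The only thing worth emphasising in the write-up is that knottedness of the field line is never used; every smooth flow line of $\mathbf{E}_t$ or $\mathbf{B}_t$ arising from Bateman's construction with the projections in (\ref{eq:stereo3}) is Legendrian with respect to $\xi_t$, knotted or not.
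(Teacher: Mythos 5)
Your proposal is correct and follows essentially the same route as the paper: both expand $h\,(\nabla u\times\nabla v)$ into real and imaginary parts and observe that each is a real linear combination of $\text{Re}(\nabla u\times\nabla v)$ and $\text{Im}(\nabla u\times\nabla v)$, so the tangent vector of any electric or magnetic field line lies in $\xi_t$. Your algebra is in fact cleaner than the paper's (whose second displayed identity contains a typo), and your closing remark that knottedness is never used matches the spirit of the paper's conclusion that \emph{every} such field line is Legendrian.
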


\begin{proof}
Suppose that $L$ is a link that is formed by field lines of the electric field. Then at each point $(x,y,z)\in\mathbb{R}^3$ on $L$ the tangent vector at that point $T(x,y,z)$ is in the span of $\{\text{Re}(\nabla u \times \nabla v),\text{Im}(\nabla u \times \nabla v)\}$ because
\begin{equation}
T(x,y,z)=Re(h(u,v)\nabla u\times\nabla v)=\text{Re}(h(u,v))\text{Re}(\nabla  u\times\nabla v)-\text{Im}(h(u,v))\text{Im}(\nabla u\times\nabla v).
\end{equation}
Similarly, links that are given by flow lines of the magnetic fields are in the span of $\{\text{Re}(\nabla u \times \nabla v),\text{Im}(\nabla u \times \nabla v)\}$ because
\begin{equation}
T(x,y,z)=Im(h(u,v)\nabla u\times\nabla v)=\text{Re}(h(u,v))\text{Im}(\nabla  u\times\nabla v)+\text{Re}(h(u,v))\text{Im}(\nabla u\times\nabla v).
\end{equation}

In other words, every link that is formed by electric or magnetic field lines is a Legendrian link with respect to the contact form $\xi_t$. Furthermore, this is true for all time $t$. The time evolution given by the $t$-dependence of $u$ and $v$ is via contactomorphisms carrying Legendrian links to Legendrian links.
\end{proof}

We would like to use these facts for the construction of knotted field lines. For every contact structure on $\mathbb{R}^3$ every knot $K$ can be realised as a Legendrian knot with respect to that contact structure in infinitely many ways \cite{etnyre}. For each of these parametrisations $\phi:S^1\to \mathbb{R}^3$ of $K=\phi(S^1)$ we obtain a complex-valued function $h:K\to\mathbb{C}$ given by
\begin{align}
\text{Re}(h(x,y,z))&=\nabla \phi(x,y,z)\cdot\text{Re}(\nabla u\times\nabla v),\qquad\text{ for all }(x,y,z)\in K,\nonumber\\
\text{Im}(h(x,y,z))&=\nabla \phi(x,y,z)\cdot\text{Im}(\nabla u\times\nabla v),\qquad\text{ for all }(x,y,z)\in K.
\end{align} 

We defined $h$ as a function on a subset $K$ of $\mathbb{R}^3$, but by identifying points in $\mathbb{R}^3\cup\{\infty\}$ with points in $S^3$ via the functions $u$ and $v$, we can think of $h$ as a function on a subset $K$ of $S^3\subset\mathbb{C}^2$. The decisive question is whether among the infinitely many Legendrian parametrisations of $K$ there is one parametrisation such that the complex-valued function $h$ can be extended to a function $\tilde{h}:U\to\mathbb{C}$, where $U$ is an open set in $\mathbb{C}^2$ containing $S^3$, such that $\tilde{h}$ is holomorphic on $S^3$, i.e.,
\begin{equation}
\partial_{\overline{z_1}}\tilde{h}(z_1,z_2)=0\qquad\text{ and }\qquad \partial_{\overline{z_2}}\tilde{h}(z_1,z_2)=0
\qquad\text{ for all }(z_1,z_2)\in S^3\subset\mathbb{C}^2.
\end{equation}

In summary, we have the following result.
\begin{proposition}
A knot $K$ can be realised as a topologically stable field of an electric or magnetic field using Bateman's construction and the choice of projection maps $(u,v)$ as in Eq. (\ref{eq:stereo3}) if and only if there exists a parametrisation of $K$ that is Legendrian with respect to $\xi_t$ and such that $h:K\to\mathbb{C}$ can be extended to some $\tilde{h}:U\to\mathbb{C}$ such that $\tilde{h}$ is holomorphic on $S^3$, where $U$ is an open set in $\mathbb{C}^2$ containing $S^3$.
\end{proposition}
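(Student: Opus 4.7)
The plan is to derive both implications directly from Lemma \ref{lem:leg}, together with the observation that every holomorphic $\tilde{h}$ on a neighbourhood of $S^3$ yields, via Bateman's formula, a legitimate solution of Maxwell's equations whose field-line topology is preserved by time evolution.

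For the ``only if'' direction, suppose $K$ is realised as a topologically stable knotted field line of the electric field $\text{Re}(\mathbf{F}_t)$ arising from Bateman's construction with some holomorphic generator $\tilde{h}:U\to\mathbb{C}$. I would parametrise $K$ at $t=0$ by the flow of $\text{Re}(\mathbf{F}_0)$ itself, so that along $K$ one has $\dot\phi=\text{Re}(\tilde{h}\,\nabla u\times\nabla v)$. By Lemma \ref{lem:leg} this parametrisation is Legendrian with respect to $\xi_0$. Dotting $\dot\phi$ successively with $\text{Re}(\nabla u\times\nabla v)$ and $\text{Im}(\nabla u\times\nabla v)$ and using the orthogonality of these two vector fields (noted explicitly in the paragraph preceding Lemma \ref{lem:leg}) shows that the function $h:K\to\mathbb{C}$ defined in the statement agrees with $\tilde{h}|_K$, after a harmless reparametrisation of $\phi$ to absorb the common length factor. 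Thus $\tilde{h}$ itself serves as the required holomorphic extension. The argument for a magnetic field line is identical, with the flow of $\text{Im}(\mathbf{F}_0)$ in place of that of $\text{Re}(\mathbf{F}_0)$.

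For the ``if'' direction, suppose we are given a Legendrian parametrisation $\phi$ of $K$ with respect to $\xi_0$ together with a holomorphic extension $\tilde{h}:U\to\mathbb{C}$ of the associated $h:K\to\mathbb{C}$, where $U$ is an open neighbourhood of $S^3$. Set $\mathbf{F}_t(u,v)=\tilde{h}(u,v)\,\nabla u\times\nabla v$. Holomorphicity of $\tilde{h}$ guarantees that $\mathbf{F}_t$ solves Maxwell's equations for every $t$. At each point of $K$ the Legendrian hypothesis places $\dot\phi$ in the contact plane $\xi_0$, and expanding $\dot\phi$ in the orthogonal basis $\{\text{Re}(\nabla u\times\nabla v),\text{Im}(\nabla u\times\nabla v)\}$ yields, by the very definition of $h$, the coefficients $\text{Re}(\tilde{h})$ and $-\text{Im}(\tilde{h})$. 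Hence $\dot\phi=\text{Re}(\mathbf{F}_0)$ along $K$, so $K$ is a closed electric field line at time $0$. Topological stability for all subsequent $t$ then follows from the contactomorphism structure of the time evolution described in the paragraphs preceding Lemma \ref{lem:leg}: field lines at time $t$ are images of field lines at time $0$ under a contactomorphism carrying $\xi_0$ to $\xi_t$, and such a contactomorphism preserves both the Legendrian property and the ambient knot type.

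The main obstacle I anticipate is not conceptual but bookkeeping. The definition of $h$ in the statement of the proposition uses the dot products of $\dot\phi$ with $\text{Re}(\nabla u\times\nabla v)$ and $\text{Im}(\nabla u\times\nabla v)$, whereas Lemma \ref{lem:leg} writes the tangent to a field line as the linear combination $\text{Re}(h)\text{Re}(\nabla u\times\nabla v)-\text{Im}(h)\text{Im}(\nabla u\times\nabla v)$; matching these two descriptions of $h$ requires choosing the Legendrian parametrisation of $K$ up to reparametrisation and verifying that the two basis vectors of $\xi_t$ have appropriately matched lengths, a fact that can be read off from the explicit formula for $\alpha_t\wedge\rmd\alpha_t$ displayed in the paper. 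Once this normalisation is fixed, the proposition reduces to two parallel applications of Lemma \ref{lem:leg}.
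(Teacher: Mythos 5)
Your argument is correct and takes essentially the route the paper intends: the proposition is stated there only as a summary of Lemma \ref{lem:leg} and the surrounding discussion, and your two directions (parametrising a stable field line by its own flow to get the Legendrian condition and the extension $\tilde{h}$, and conversely feeding a holomorphically extendable Legendrian parametrisation back into Bateman's ansatz) are exactly that reasoning written out. One small quibble on your bookkeeping paragraph: the equality of the lengths of $\text{Re}(\nabla u\times\nabla v)$ and $\text{Im}(\nabla u\times\nabla v)$, which you need to absorb the normalisation (together with the sign discrepancy between the displayed definition of $\text{Im}(h)$ and the expansion in Lemma \ref{lem:leg}), is not read off from the formula for $\alpha_t\wedge\rmd\alpha_t$ but from the nullness of the Bateman field, $(\nabla u\times\nabla v)\cdot(\nabla u\times\nabla v)=0$, which yields both the orthogonality and the equal norms of the two spanning vector fields.
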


It is not clear at all if such parametrisations exist for every knot or if perhaps the links constructed in \cite{kedia1} remain the only ones. We would like to point out that Ra{\~n}ada's construction has also been interpreted using contact geometry \cite{contact}. Furthermore, Costa e Silva, Goulart and Ottoni interpreted knotted solutions to Maxwell's equations as foliations of space time \cite{foliation}. 

These different approaches to known constructions of topologically stable knotted field lines in electromagnetic fields could lead to new constructions and hopefully, eventually, to a classification of the links that can arise as stable field lines.




\end{document}